\newcommand{\lan}{\langle }
\newcommand{\ran}{\rangle}
\newcommand{\MM}{\mathcal{M}}
\newcommand{\MO}{\mathcal{O}}
\newtheorem{theorem}{Theorem}[section]
\newtheorem{corollary}[theorem]{Corollary}
\newtheorem{definition}[theorem]{Definition}
\newtheorem{example}[theorem]{Example}
\newtheorem{lemma}[theorem]{Lemma}
\newtheorem{proposition}[theorem]{Proposition}
\newcommand{\MC}{\mathcal{C}}
\newcommand{\Tr}{\mathrm{Tr}}
\newcommand{\st}{\star}
\newcommand{\we}{\wedge}
\newcommand{\pa}{\partial}
\newcommand{\ti}{\times}
\newcommand{\al}{\alpha}
\newcommand{\na}{\nabla}
\newcommand{\ep}{\epsilon}
\newcommand{\MF}{\mathcal{F}}
\newcommand{\MI}{\mathcal{I}}
\newcommand{\ZT}{\mathbb{Z}_2}
\newcommand{\Id}{\mathrm{Id}}
\newcommand{\SLC}{SL(2,\mathbb{C})}
\newcommand{\tM}{\tilde{M}}
\newcommand{\MD}{\mathcal{D}}
\newcommand{\divv}{\mathrm{div}}
\newcommand{\tL}{\tilde{L}}
\newcommand{\tv}{\tilde{v}}
\newcommand{\supp}{\mathrm{supp}(T)}
\newcommand{\ZH}{\mathbb{Z}_3}
\newcommand{\ttau}{\tilde{\tau}}
\begin{document}
	\title[Existence of nondegenerate $\ZT$ harmonic 1-forms via $\mathbb{Z}_3$ Symmetry]{Existence of nondegenerate $\ZT$ harmonic 1-forms via $\mathbb{Z}_3$ Symmetry}
	\author{Siqi He} 
	\date{\today}
	\address{Simons Center for Geometry and Physics, StonyBrook University\\Stonybrook, NY 11794}
	\email{she@scgp.stonybrook.edu}
	
		\maketitle
	
 \centerline{{\it Dedicated to Professor Rafe Mazzeo on the occasion of his 60th birthday}} 
	
	\begin{abstract}
		Using $\ZH$ symmetry, we present a topological condition for the existence of the $\ZT$ harmonic 1-forms over Riemannian manifold. As a corollary, if $L$ is an oriented link on $S^3$ with determinant zero, then there exists a nondegenerate $\ZT$ harmonic 1-form over the 3-cyclic branched covering of $L$. Furthermore, we found infinite number of rational homology 3-spheres that admit a nondegenerate $\ZT$ harmonic 1-form.
	\end{abstract}
	\section{Introduction}
	\subsection{The multivalued harmonic 1-forms}
	Let $M$ indicates a compact smooth oriented n-dimensional manifold with Riemannian metric $g$. A multivalued harmonic 1-form on $M$ is defined by a data set $(L,\MI,v)$, where $L$ is a smooth oriented codimension 2 embedded submanifold of $M$, $\MI$ is a flat real line bundle over $M$ with monodromy $-1$ along any small loop linking $L$, and $v\in L^2$ is a $\MI$ valued 1-form defined on $M\setminus L$ that satisfies 
	$$dv=d\st_gv=0,
	$$
	where $\st_g$ is the Hodge star operator using Riemannian metric $g$ and the differential $d$ make sense using the flat structure on $\MI$. 
	
	A multivalued harmonic form $v$ is called a $\ZT$ harmonic 1-form, named in \cite{taubes2013psl2c}, if $v\neq 0$ and bounded. The $\ZT$ harmonic 1-forms play an essential role in gauge theory with non-compact gauge group, which characterizes non-compactness behavior of the $\SLC$ flat connection equations, according to the pioneering work of Taubes \cite{taubes2013compactness,taubes2013psl2c}. We refer to \cite{donaldson2019deformations,takahashi2015moduli,taubeswu2020,taubes2014zero,zhang2017rectifiability} as works that investigate the analytic aspect of the $\ZT$ harmonic 1-form.
	
	Therefore, constructing examples of $\ZT$ harmonic forms is of great interest. Currently, most known examples of $\ZT$ harmonic 1-forms over closed manifolds are coming from K\"ahler geometry. The main goal of our paper is to construct examples of $\ZT$ harmonic 1-forms over real closed manifolds.
	\subsection{The expansions of the multivalued harmonic 1-forms}
	A local model of $\ZT$ harmonic 1-form will be the following: let $M=\mathbb{C}$, $L=\{0\}$, we define $v_k=z^{k-\frac12}dz$ for $k$ is a non-negative integer. As $\sqrt{z}$ is two valued over $\mathbb{C}\setminus \{0\}$, we could regard $v_k$ as either a section of $\MI\otimes T^{\st}M|_{M\setminus L}$, where $\MI$ is the M\"obius bundle or a multivalued 1-form over $M\setminus L$. Over any compact set on $\mathbb{C}$, $v_k$ lies in $L^2$ and when $k\geq 1$, $v_k$ is bounded.
	
	In general, a multivalued harmonic 1-form will not necessarily be bounded near $L$. Moreover, not every Riemannian manifold admits bounded $\ZT$ harmonic 1-form. If the Riemannian manifold has non-negative Ricci curvature, then there doesn't exist $\ZT$ harmonic 1-form, according to the Weitzenb\"ock identity.
	
	The following descriptions of multivalued harmonic 1-forms are based on \cite{donaldson2019deformations}, see also \cite{haydys2020seiberg}. Given a multivalued harmonic 1-form $(L,\MI,v)$, the flat bundle $\MI$ defines a map $\pi_1(M\setminus L)\to \mathbb{Z}_2$, which we could use the kernel to define a double branched covering $p:\tM\to M$, branched along $L$, together with $\sigma:\tM\to \tM$ an involution. In addition, $p^{\st}v$ will be a well-defined 1-form on $\tM$ satisfying the harmonic equations for $p^{\st}g:$
	$$
	d(p^{\st}v)=d(\st_{p^{\st}g}p^{\st}v)=0.
	$$
	Despite the fact that $p^{\st}g$ is not smooth along $L$, $p^{\st}g$ is Lipschitz, and the Hodge theorem holds for this type of metric \cite{teleman1983theindex,mazzeohunsicker05}. As a result, $p^{\st}v$ represents a homology class $\chi\in H^1_-(\tM;\mathbb{R})$, where $H^1_-(\tM;\mathbb{R})$ is the $-1$ eigenspace of the induced map of the involution $\sigma^{\st}:H^1(\tM;\mathbb{R})\to H^1(\tM;\mathbb{R})$. 
	
	The converse of the preceding discussion is equally valid. Given $L$ a embedded codimension 2 submanifold of $M$ with flat $\mathbb{R}$ bundle $\MI$ over $M\setminus L$ satisfying the above monodromy assumption. We write the corresponding double branched covering $p:\tM\to M$. For any $\chi\in H^1_-(\tM;\mathbb{R})$, there exists a multivalued harmonic 1-form $v_{\chi}$ over $M$ such that $[p^{\st}v_{\chi}]=\chi$. As a result, the first Betti number of the branched covering $\tM$ entirely determines the multivalued harmonic1-form.
	
	Let $N$ be the normal bundle of $L$, then a neighborhood of $L$ could be identified with a neighborhood of the zero section of $N$. Using the coorientation of $L$, we could regard $N$ as a complex vector bundle. Near a neighborhood $U$ of $l\in L$, we could choose a trivialization of $N|_U$ and find a complex coordinate $z=re^{i\theta}$ on $N|_U$ such that $(z,t)$ is a coordinate on $U$ with $L\cap U=\{z=0\}$, and $t$ is a coordinate on $L$. In addition, $z^{\frac12+k}$ could be understood as a multivalued section of $N^{\frac12+k}$.
	
	By the work of Donaldson \cite{donaldson2019deformations}, over $U$, there exists an expansion
	\begin{equation}
		\label{eq_expanison}
		v\sim \Re(d(A(t)z^{\frac12})+d(B(t)z^{\frac32}))+\MO(r^{\frac32-\ep}).
	\end{equation}
	There are two ways to interpret the $A(t)$, $B(t)$. After we choose a trivialization of $N$ over $U$, we could regard $A,B$ are complex functions over $L\cap U$. In particular, if $N$ is trivial, then we could regard $A,B$ are complex functions over $L$, which are the cases that is most considered in this paper. There is also a global version of the expansion explained in \cite[Section 3]{donaldson2019deformations} without assuming $N$ is trivial, where $A\in \Gamma(N^{-\frac12})$ and $B\in \Gamma(N^{-\frac32})$ could be understood as sections of bundles, while $A(t)z^{\frac12}$ and $B(t)z^{\frac32}$ are pairing between suitable bundles. 
	
	From the expansion \eqref{eq_expanison}, a multivalued harmonic 1-form is bounded if the $A\equiv 0$. Now, we will introduce an important class of $\ZT$ harmonic 1-forms, which could be regarded as "generic" $\ZT$ harmonic 1-forms.
	\begin{definition}
		A $\ZT$ harmonic 1-form $(L,\MI,v)$ is called nondegenerate if the "B" terms of the expansion of $v$ in \eqref{eq_expanison} is nowhere vanishing along $L$. 
	\end{definition}
	
	The following example explains the reason to introduce the above definition. Suppose $M$ is a Riemannian surface and $(L,\MI,v)$ is a multivalued harmonic form. Then we can find a meromorphic differential $\al$ over $M$ such that $\Re(\al)=v\otimes v$. $v$ is a bounded multivalued harmonic form if and only if $\al$ is a holomorphic quadratic differential. $v$ is nondegenerate if and only if $\al$ is a quadratic differential with simple zeroes. 
	
	In general, given a homology class $\chi\in H^1_-(\tM,\mathbb{R})$, let $v_{\chi}$ be the corresponding multivalued harmonic forms. The "$A$" term for the corresponding $v_{\chi}$ might not vanish. In the rest of the paper, we will introduce a $\ZH$ symmetry to make $A(t)\equiv 0$. The $\ZH$ symmetry have also been used by Taubes and Wu \cite{taubeswu2020} in getting models for $\ZT$ harmonic 1-form with graphic singularity over $\mathbb{R}^3$. We also refer \cite{mazzeoandriyroyosuke2022,takahashi2015moduli} for related analytic aspects.
	\subsection{Main results}
	Now, we will introduce our main results.
	\begin{definition}
		\label{assumption} We say $(\tM,M,\chi)$ is a $\ZH$ symmetric triple if there exists a $\ZH$ action $\tau:M\to M$ with $\tau^3=\Id$ and fixed locus $\mathrm{Fix}(\tau)=L$, such that
		\begin{itemize}
			\item [(i)] $\tau$ has a lifting $\ttau$ on $\tM$ such that $\ttau\circ\sigma=\sigma\circ\ttau$.
			\item [(ii)] $\chi\in H^1_-(\tM;\mathbb{R})$ is invariant under $\ttau$, $\ttau^{\st}\chi=\chi$.
		\end{itemize}
	\end{definition}
	It's worth noting that the above definition does not necessitate the existence of a Riemannian metric on $M$, which is a fully topological condition. The following is a statement of our main theorem.
	
	\begin{theorem}
		\label{main_theorem}
		Suppose $(\tM,M,\chi)$ is a $\ZH$ symmetric triple, then the following holds:
		\begin{itemize}
			\item [(i)]For any $\ZH$ invariant smooth Riemannian metric on $M$, there exists a bounded multivalued harmonic 1-form $(L,\MI,v)$ with $[p^{\st}v]=\chi$ and $L$ is the fixed point of the $\ZH$ action.
			\item [(ii)]Suppose when $M$ is a 3-manifold, then for generic $\ZH$ invariant smooth Riemannian metric on $M$, $(L,\MI,v)$ is nondegenerate.
		\end{itemize} 
	\end{theorem}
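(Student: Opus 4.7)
The plan is to leverage the correspondence recalled in Section 1.2: the class $\chi\in H^1_-(\tM;\mathbb{R})$ determines a unique multivalued harmonic 1-form $v_\chi$ on $M$ with $[p^{\st}v_\chi]=\chi$, and the theorem reduces to controlling the leading coefficients $A$ and $B$ in the expansion \eqref{eq_expanison} of $v_\chi$ along $L$. A key preliminary is to upgrade this correspondence to a $\ZH$-equivariant one. Since the metric is $\ZH$-invariant, $\ttau$ is an isometry of $\tM$ commuting with $\sigma$, so $\ttau^{\st}$ preserves the space of $\sigma$-anti-invariant harmonic 1-forms. By hypothesis $\ttau^{\st}\chi=\chi$, and by uniqueness of harmonic representatives, $\ttau^{\st}(p^{\st}v_\chi)=p^{\st}v_\chi$. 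I would also observe that $\ttau^3$ is a deck transformation of $p$, hence lies in $\{\id,\sigma\}$; the invariance $\ttau^{\st}\chi=\chi$ combined with $\sigma^{\st}\chi=-\chi$ rules out $\ttau^3=\sigma$ when $\chi\ne 0$, so $\ttau^3=\id$.

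For part (i), I would examine the equivariance near a point $l\in L=\mathrm{Fix}(\tau)$ with unique preimage $\tilde{l}=p^{-1}(l)\in\tM$. Choose local coordinates $(w,t)$ on $\tM$ near $\tilde{l}$ with $w^2=z$ identifying the complex normal coordinate on $M$; then $\sigma:w\mapsto -w$ and $\ttau:w\mapsto\zeta w$ for some $\zeta$ with $\zeta^3=1$. Nontriviality of the $\ZH$-action on $N_l$ (ensured by $\mathrm{Fix}(\tau)=L$ and the Bochner linearization of $\tau$) forces $\zeta$ to be a primitive cube root of unity. Pulling back \eqref{eq_expanison} gives
$$
p^{\st}v_\chi\sim\Re\bigl(d(A(t)w)+d(B(t)w^3)\bigr)+\MO(|w|^{3-\ep}),
$$
and the invariance $\ttau^{\st}(p^{\st}v_\chi)=p^{\st}v_\chi$ forces $\zeta A(t)=A(t)$ for every $t$. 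Since $\zeta\ne 1$, we conclude $A\equiv 0$ on $L$, proving boundedness. The $B$-term is unconstrained by symmetry because $\zeta^3=1$.

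For part (ii), when $\dim M=3$ the set $L$ is one-dimensional and $B_g$ is a section of a complex line bundle over $L$, so the zero locus of $B_g$ has expected real codimension $2$ in $L$ and generic sections are nowhere vanishing. I would run a parametric transversality argument: on the Banach manifold $\MG$ of $\ZH$-invariant Riemannian metrics of suitable regularity, consider the smooth map $\tilde\Phi:\MG\times L\to N^{-3/2}|_L$ given by $(g,l)\mapsto B_g(l)$. If $\tilde\Phi$ is transverse to the zero section, then Sard--Smale yields that for generic $g\in\MG$ the map $l\mapsto B_g(l)$ is transverse to zero, hence nowhere zero since $\dim L=1<2$. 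The variation $\delta B$ induced by an equivariant metric perturbation $\delta g$ is computed by differentiating the harmonic form equation on $\tM\setminus\tilde{L}$, treating the variation of $v_\chi$ as the unique bounded harmonic solution with a prescribed source and extracting the $w^3$-coefficient of its leading asymptotics.

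The main obstacle is verifying this transversality: given $g\in\MG$ and $l_0\in L$ with $B_g(l_0)=0$, one must exhibit $\ZH$-invariant perturbations $\delta g$ whose induced variations $\delta B(l_0)$ span $\mathbb{C}$. The $\ZH$-equivariance restricts the space of admissible perturbations, and one must show that the surviving class is still rich enough to separate the two real directions at a single point $l_0$. This typically proceeds by constructing explicit $\delta g$ localized in a fundamental domain for the $\ZH$ action away from $L$, extended $\ZH$-equivariantly, and invoking a unique-continuation / duality argument on $\tM$ to rule out any nonzero linear functional on $\mathbb{C}$ that annihilates all such $\delta B(l_0)$.
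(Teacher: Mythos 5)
Your part (i) is essentially the paper's argument: equivariant Hodge theory for the Lipschitz metric $p^{\st}g$ on $\tM$ plus uniqueness of harmonic representatives forces $\sigma^{\st}\tv=-\tv$ and $\ttau^{\st}\tv=\tv$, and the order-$3$ rotation on the normal bundle of $L$ annihilates the $z^{-\frac12}dz$ coefficient while leaving the $z^{\frac12}dz$ coefficient unconstrained (it changes sign, but only within the same multivalued branch). Whether you read off the characters downstairs with $\tau^{\st}z=e^{2\pi i/3}z$, as the paper does, or upstairs in the $w$-coordinate with $w^2=z$, as you do, is immaterial; your remark that $\ttau^3=\id$ rather than $\sigma$ is a sensible supplementary check that the paper leaves implicit.

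For part (ii), however, there is a genuine gap. Your strategy (view $B_g$ as a section of a line bundle over the $1$-dimensional $L$, whose zero set has expected codimension $2$, and apply Sard--Smale to the universal map $(g,l)\mapsto B_g(l)$) is exactly the paper's, but the decisive step --- surjectivity of the linearization $\delta g\mapsto \delta B(l_0)$ onto $\mathbb{C}$ --- is precisely what you flag as ``the main obstacle'' and do not prove; this is where essentially all the analytic work of the theorem lives. The paper carries it out by (a) representing the variation through the Green function $G_1$ of the multivalued Laplacian, $\frac{d}{ds}|_{s=0}B_s(t)=\int_M G_1\,\dot{\MD}v_0\,dp'$, following Donaldson; (b) computing $\dot{\MD}v_0$ explicitly in terms of $T=\delta g$ and integrating by parts (Lemma \ref{lem_formula}) to reduce the pairing to $\int_M(\Tr(TS)-\Tr(T)\Tr(S))\,dp'$ with $S=\frac12(dG_1\otimes v_0+v_0\otimes dG_1)$; (c) showing that vanishing of this for all admissible $T$ forces $S\equiv 0$ away from $l$ (Lemma \ref{lem_contraction}, which needs both the traceless and trace parts of $T$); and (d) deriving a contradiction from the explicit Bessel-function asymptotics guaranteeing $\nabla G_1\neq 0$ near $L$ together with unique continuation for $p^{\st}v_0$ on $\tM$. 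Your sketch invokes unique continuation and a duality argument, which is the right instinct, but without the Green-function representation of $\delta B$ and the algebraic reduction to the tensor $S$ the surjectivity claim is not established. Your concern about the $\ZH$-equivariance constraint on $\delta g$ is legitimate but is absorbed by the fact that the entire construction (base metric, $v_0$, $G_1$, and the fixed point $l_0$) is $\ZH$-invariant, so one may symmetrize the perturbation without losing the nonvanishing of the induced variation; the paper records this only as a one-line remark.
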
	
	
	Over 3-manifold, we construct examples of $\ZH$ symmetric triple using cyclic branched covering of oriented links.
	\begin{corollary}
		Let $M$ be a rational homology 3-sphere with $L$ be an oriented link on $M$ and let $M_k$ be the k-cyclic branched covering of $L$. Suppose the determinant of $L$ vanishes, then there exists a nondegenerate $\ZT$ harmonic 1-form on $M_3$.
	\end{corollary}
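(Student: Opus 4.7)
The strategy is to exhibit a $\ZH$ symmetric triple of the form $(M_6, M_3, \chi)$ and then invoke Theorem~\ref{main_theorem}(ii). The 6-cyclic branched covering $M_6 \to M$ has Galois group $\mathbb{Z}_6 \cong \mathbb{Z}_2 \times \mathbb{Z}_3$, so it factors through both $M_3$ and $M_2$: writing $\pi_3: M_3 \to M$ and $\pi_2: M_2 \to M$ for the 3- and 2-fold cyclic branched coverings and $q: M_6 \to M_3$, $r: M_6 \to M_2$ for the intermediate quotient maps (so $\pi_3 \circ q = \pi_2 \circ r$), one sees that $q$ is a double branched covering with deck involution $\si$ branched along $\tL := \pi_3^{-1}(L)$, while the order-$3$ deck transformation $\tau$ of $\pi_3$ (whose fixed locus is $\tL$) lifts via $q$ to an order-$3$ diffeomorphism $\ttau$ of $M_6$ that commutes with $\si$ because $\mathbb{Z}_6$ is abelian. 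This verifies condition (i) of Definition~\ref{assumption} with $\tM = M_6$.

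To obtain the class $\chi$, I would use the transfer isomorphism $r^\st : H^1(M_2;\RR) \xrightarrow{\cong} H^1(M_6;\RR)^{\ttau^\st}$, valid in characteristic zero for the branched $\ZH$-quotient $r: M_6 \to M_2$, and note that under this identification $\si^\st$ on the right corresponds to the deck involution of $\pi_2$ on the left. Since $M$ is a rational homology 3-sphere, $\pi_2^\st H^1(M;\RR) = 0$, so the $(+1)$-eigenspace of the deck involution on $H^1(M_2;\RR)$ vanishes and all of that cohomology sits in the $(-1)$-eigenspace; consequently
$$
H^1_-(M_6;\RR)^{\ttau^\st} \;\cong\; H^1(M_2;\RR).
$$

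Under the convention that $\det(L) := |H_1(M_2;\mathbb{Z})|$ (interpreted as $0$ when the group is infinite), the hypothesis $\det(L) = 0$ is equivalent to $b_1(M_2) \geq 1$, which makes the displayed space nonzero. I pick any nonzero $\chi$ there, and equip $M_3$ with a smooth $\ZH$-invariant Riemannian metric (such metrics exist by averaging any smooth metric over $\langle \tau \rangle$) that is generic in the sense of Theorem~\ref{main_theorem}(ii); that theorem then produces a nondegenerate $\ZT$ harmonic $1$-form on $M_3$ associated to $(\tM, M_3, \chi)$, proving the corollary. The step I expect to require the most care is the $\si$-equivariant transfer identification of $H^1(M_6;\RR)^{\ttau^\st}$ with $H^1(M_2;\RR)$, which demands that the standard finite-group transfer argument extend cleanly through the codimension-two branch locus, together with fixing the meaning of $\det(L)$ for a link in a general rational homology 3-sphere; once these points are in place, the rest of the argument is a direct application of Theorem~\ref{main_theorem}.
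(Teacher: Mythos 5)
Your proposal is correct and follows essentially the same route as the paper: build the $\ZH$ symmetric triple $(M_6,M_3,\chi)$ from the $6$-fold cyclic branched covering, use the rational homology sphere hypothesis to show the $\sigma^\st$-invariant part of the transferred class vanishes (the paper does this by decomposing $\chi=\chi^++\chi^-$ on $M_6$ and pushing $\chi^+$ down to $M$, which is your eigenspace argument in different packaging), identify $\det(L)=0$ with $b_1(M_2)>0$ via $|H_1(M_2)|=|\Delta_L(-1)|$, and then apply Theorem~\ref{main_theorem}(ii). No substantive differences to report.
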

	
	By \cite{mclean98defomation}, harmonic 1-form will generate smooth deformations of special Lagrangian submanifolds, whereas nondegenerate $\ZT$ harmonic 1-form is responsible for the branched deformations \cite{he2022brancehd}. The existence of a multivalued harmonic 1-form over a manifold with vanishing first Betti number will be of particular importance. This means that the special Lagrangian submanifold will exhibit branched deformation rather than smooth deformation. In particular, we construct examples of multivalued harmonic 1-forms over rational homology 3-spheres.
	\begin{theorem}
		\label{thm_infinity}
		There exists infinite number of rational homology 3-spheres which admit nondegenerate $\ZT$ harmonic 1-form.
	\end{theorem}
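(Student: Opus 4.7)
The plan is to reduce to the preceding Corollary and produce an explicit infinite family of oriented links in $S^3$ to which it applies. It suffices to exhibit, for every $n$, an oriented link $L_n \subset S^3$ with (a) $\det(L_n) = 0$, and (b) the 3-fold cyclic branched cover $M_n := \Sigma_3(L_n)$ a rational homology 3-sphere, such that the $M_n$ are pairwise non-homeomorphic. By Fox's formula for the order of $H_1$ of a cyclic branched cover, (b) is equivalent to $\Delta_{L_n}(\omega) \neq 0$ for $\omega = e^{2\pi i/3}$, in which case $|H_1(M_n; \mathbb{Z})| = |\Delta_{L_n}(\omega)|^2$; while (a) is $\Delta_{L_n}(-1) = 0$. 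These two divisibility conditions, $(t+1) \mid \Delta$ and $(t^2+t+1) \nmid \Delta$, are manifestly compatible.

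I would first exhibit a single seed link $L_0 \subset S^3$ satisfying $\det(L_0) = 0$ and $\Delta_{L_0}(\omega) \neq 0$. A natural candidate is a $2$-component link whose reduced (Hosokawa) Alexander polynomial is divisible by $(t+1)$ but not by $(t^2+t+1)$, which can be checked directly from a Seifert matrix. Fixing such an $L_0$, I would then form
\[
L_n := L_0 \,\#\, T(2, 2n+1), \qquad n \geq 1,
\]
where the connected sum is taken along one fixed component of $L_0$. Multiplicativity of the Alexander polynomial under connected sum gives $\Delta_{L_n}(-1) = 0$ and $\Delta_{L_n}(\omega) \neq 0$; the latter uses the fact that $\Delta_{T(2,2n+1)}(\omega) = (\omega^{2n+1}+1)/(\omega+1) \neq 0$ for every $n$.

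Since cyclic branched covering commutes with connected sum along a component, $M_n \cong \Sigma_3(L_0) \,\#\, \Sigma(2,3,2n+1)$, a connected sum of the fixed rational homology sphere $\Sigma_3(L_0)$ with the Brieskorn manifolds $\Sigma(2,3,2n+1)$ (integer homology spheres when $\gcd(3,2n+1)=1$). The Brieskorn spheres $\Sigma(2,3,2n+1)$ are pairwise non-homeomorphic, distinguished for instance by their Casson invariants, which grow without bound in $n$; since the Casson invariant is additive under connected sum, the $M_n$ are pairwise distinct for $n$ sufficiently large. Applying the Corollary to each $L_n$ produces a nondegenerate $\ZT$ harmonic 1-form on every $M_n$, yielding the theorem.

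The main obstacle is producing the seed link $L_0$; everything else rests on standard facts about Alexander polynomials, branched covers of connected sums, Brieskorn spheres, and the Casson invariant. However, $L_0$ is not hard to exhibit explicitly: by a direct Fox-calculus or Seifert-matrix computation on a small-crossing $2$-component link (or an appropriate pretzel link $P(p,q,r)$), one verifies $(t+1) \mid \Delta_{L_0}$ and $(t^2+t+1) \nmid \Delta_{L_0}$, which supplies the required seed.
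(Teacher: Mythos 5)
Your argument is correct in outline and reaches the theorem by a genuinely different construction from the paper's. Both proofs reduce to Corollary~\ref{cor} together with the nondegeneracy statement of Theorem~\ref{main_theorem}(ii), and both need a seed link $L_0$ with $\det(L_0)=0$ and $\Delta_{L_0}(\zeta)\Delta_{L_0}(\zeta^2)\neq 0$ for $\zeta=e^{2\pi i/3}$; the paper simply takes one from the LinkInfo table in Appendix~\ref{linktablelist} (e.g.\ L8n6), which disposes of what you call the main obstacle. From there the paper forms the split union $L_n=L\cup nK$ with $n$ trefoils and distinguishes the covers by the order $|H_1(S_3(L_n))|=4^n|\Delta_L(\zeta)\Delta_L(\zeta^2)|$, whereas you take connected sums with $T(2,2n+1)$ along a fixed component, so that $S_3(L_n)\cong S_3(L_0)\,\#\,\Sigma(2,3,2n+1)$, and must then distinguish the results by a finer invariant. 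Your route has the advantage that multiplicativity of $\Delta$ under connected sum is completely standard (the paper's product formula for split unions requires more care, since the unnormalized one-variable Alexander polynomial of a split link vanishes), but it forfeits the cheap $|H_1|$ count: when $\gcd(3,2n+1)=1$ the Brieskorn summands are integer homology spheres, so all your $M_n$ share the same $|H_1|$. Two small repairs are needed to close your argument: restrict to $2n+1$ coprime to $3$ (or treat the remaining case, where $\Sigma(2,3,2n+1)$ has $|H_1|=4$, separately), and note that the Casson invariant is defined only for integer homology spheres, so for the rational homology spheres $M_n$ you should either pass to the Casson--Walker invariant (which is additive under connected sum) or, more simply, invoke uniqueness of the prime decomposition together with the fact that the irreducible Seifert fibered manifolds $\Sigma(2,3,2n+1)$ are pairwise non-homeomorphic. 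With those adjustments your proof goes through and is, if anything, more robust than the split-union argument.
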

	\textbf{Acknowledgements.} The author is grateful to Simon Donaldson for introducing this problem and providing countless helpful discussions. The author would also like to thank Rafe Mazzeo, Donghao Wang, and Langte Ma for their valuable input.
	
	\section{Existence and transversality}
	Now, we will prove Theorem \ref{main_theorem}. We will first use the $\ZH$ symmetry to construct bounded multivalued 1-form, and then establish a transversality result for generic $\ZH$ metric.
	\subsection{Existence of $\ZT$ harmonic 1-forms.}
	Proof of Theorem \ref{main_theorem} (i): Let $(\tM,M,\chi)$ be a $\ZH$ symmetric triple and $p:\tM\to M$ be the double branched covering induced by flat bundle $\MI$ and $\sigma:\tM\to\tM$ be the involution. Let $g$ be any $\ZH$ invariant smooth Riemannian metric over $M$, then $p^{\st}g$ is a Lipschitz metric over $\tM$. By the Hodge theorem for Lipschitz metric \cite{teleman1983theindex,wang93modulispaces}, for any $\chi\in H^1_-(\tM;\mathbb{R})$, there exists $\tv$, a harmonic representative of $\chi$, such that $$d\tv=d\st_{p^{\st}g}\tv=0.$$ 
	
	Moreover, as $\sigma^{\st}\chi=-\chi$ and $\sigma^{\st}p^{\st}g=p^{\st}g$, by the uniqueness of Hodge representative, we have $\sigma^{\st}\tv=-\tv$. As $p^{\st}g$ is also $\ZH$ invariant, for the $\ZH$ action $\ttau$ on $\tM$, we have $$d\ttau^{\st}\tv=d\st_{p^{\st}g}\ttau^{\st}\tv=0.$$
	
	As $\ttau^{\st}\chi=\chi\in H^1(\tM;\mathbb{R})$, by the uniqueness of Hodge representative, we have $\ttau^{\st}\tv=\tv$. As $\sigma^{\st}\tv=-\tv$, $\tv$ induces a multivalued harmonic 1-form $v$ on $M$ with $p^{\st}v=\tv$. Moreover, as $\ttau^{\st}\tv=\tv$ and $\ttau\circ\sigma=\sigma\circ\ttau$, we have $\tau^{\st}v=v$.
	
	Now we consider the local expansion of $v$. Let $x\in L$, $v\in N_x$, as $\tau(x)=x$, $\tau$ induces an action $\tau^{\st}:N_x\to N_x$ as $\tau^{\st}(v)=\frac{d}{dt}|_{t=0}\tau\circ\gamma(t)$, where $\gamma(t)$ is any curve with $\gamma(0)=x$ and $\gamma'(0)=v$. As $\tau$ preserve the Riemannian metric and $L$ is the fixed point of $\sigma$, the image of $\tau^{\st}v$ lies in $N_x$. As $\tau^{3}=\Id$ and $N_x$ is a complex vector space with "$z$" a section, $\tau^{\st}$ is a order 3 representation on a complex vector space, which must be $\tau^{\st}z=e^{\frac{2\pi i}{3}}z$. 
	
	We write the expansion of $v$ as $$v\sim \Re(d(A(t)z^{\frac12})+d(B(t)z^{\frac32}))+\MO(r^{\frac32-\ep}).$$ As we have $$\tau^{\st}v=v,\;\tau^{\st}z^{-\frac12}dz=e^{\frac{\pi i}{3}}z^{-\frac12}dz\neq z^{-\frac12}dz,$$ we obtain $A\equiv 0$. Note that $\tau^{\st}z^{\frac12}dz=-z^{\frac12}dz$, while $z^{\frac12}dz$ and $-z^{\frac12}dz$ are the same as multivalued forms, so the above symmetry doesn't cancel the remaining terms.
	\subsection{Perturbation to nondegenerate $\ZT$ harmonic 1-form.}
	Given a $\ZH$ symmetric triple $(\tM,M,\chi)$, let $k$ be a large integer and let $\MM$ be the space of $\ZH$ invariant $\MC^{k}$ Riemannian metric on $M$, which will be a Banach space. By Theorem \ref{main_theorem} (i), for any $g\in\MM$, we could obtain a bounded multivalued harmonic 1-form $(L,\MI,v)$ such that $[p^{\st}v]=\chi$. As the initial condition is $\ZH$ invariant, the constructions in the rest of this subsection are all $\ZH$ invariant. 
	
	We assume that the normal bundle $N$ of $L$ is trivial, such that we could write the expansion of $v$ as
	\begin{equation}
		v\sim \Re(B(t)z^{\frac12}dz)+\MO(r)
	\end{equation}
	where $B$ is a complex function over $L$.
	
	We now try to understand the variation of $B$ with respect to $\ZH$ invariant Riemannian metric. We fixed a based metric $g_0\in \MM$ and choose any 1-parameter family of $g_s\in \MM$. We could apply Theorem \ref{main_theorem} (i) for the metric $g_s$, we could find a family of bounded multivalued 1-form $v_s$ such that $d^{\st_{g_s}}v_s=0$. 
	
	We write $\dot{v}_0=\frac{d}{ds}|_{s=0}v_s$ and $\dot{\MD}=\frac{d}{ds}|_{s=0}d^{\st_s}$, then taking derivative of $s$ in $d^{\st_{g_s}}v_s=0$, we obtain
	\begin{equation}
		\label{eq_variation}
		\dot{\MD} v_0+d^{\st_{g_0}}\dot{v}_0=0.
	\end{equation}
	
	We need a Green function description of "B" terms introduced in \cite[Section 2, 3]{donaldson2019deformations}. Using the flat structure on $\MI$, we could define the multivalued Laplacian operator $\Delta_{g_0}$, with the Green function of $\Delta_{g_0}$ as $G(p,p')$, where $p,p'\in M$. Over \cite{donaldson12kahlermetric}, there is an explicit expressions of $G$ in terms of Bessel functions in the model case. The Green function $G(p,p')$ is smooth on the complement of the diagonal and has a standard singularity on the diagonal as Newton kernel. 
	
	Fixed $l\in L$ and in a neighborhood $l$ and let $t$ be the coordinates on $L$ centered at $l$. We write $p=(z,t)$ and $p'=(z',t')$, then for $p'\neq p$, we have the expansion along $\{z=0\}$
	\begin{equation}
		G(p,p')\sim \sum_{k,\nu\geq 0} a_{k,\nu}(t,p')e^{i(\nu+\frac12)\theta}r^{\nu+\frac12+2k}.
	\end{equation}
	We write $G_1(t,p')=a_{1,0}(t,p')$, which is singular only when $z'=0,\;t'=t$, is a multivalued function on $p'$. In this neighborhood, for the family of Riemannian metric $g_s$ with corresponding multivalued harmonic 1-form $v_s$, we write the "B" terms of $v_s$ as $B_s(t)$, which is a complex function over $L$.
	
	For \eqref{eq_variation}, by \cite[Section 2]{donaldson2019deformations}, we could write
	\begin{equation}
		\frac{d}{ds}|_{s=0}B_s(t)=\int_MG_1(t,p')\dot{\MD} v_0 dp'\in \mathbb{C},
	\end{equation}
	where $dp'$ is the volume form of $g_0$. As $G_1$ and $\dot{\MD} v_0$ are both multivalued functions, $G_1\dot{\MD} v_0$ is a complex function.
	
	Let $T:=\frac{d}{ds}|_{s=0}g_s$ is a symmetric $(0,2)$ tensor and the divergence $\divv T$ is a $(0,1)$ tensor which could be defined as $$(\divv T)V=\Tr(W\to (\na_{W}T)(V)),$$
	where $V$ and $W$ are vectors on $M$.
	
	A straight forward computation shows
	\begin{equation}
		\dot{\MD} v_0=\frac12 \lan d\Tr(T),v_0\ran-\lan \mathrm{div}\;T, v_0\ran-\lan T,\na v_0\ran,
	\end{equation}
	where the inner product is taken using $g_0$.
	
	Therefore, we obtain 
	\begin{equation}
		\label{eq_Bderivative}
		\frac{d}{ds}|_{s=0}B_s(t)=\int_M G_1 \dot{\MD} v_0 dp'=\int_M G_1(\frac12 \lan d\Tr(T),v_0\ran-\lan \mathrm{div}\;T, v_0\ran-\lan T,\na v_0\ran) dp'.
	\end{equation}
	
	\begin{lemma}
		\label{lem_formula}
		Let $X_0$ be the dual vector of $v_0$ under the isomorphism induced by the Riemannian metric $g_0$, then we obtain the following equalities.
		\begin{itemize}
			\item [(i)]
			$G_1\divv(T)(X_0)=\divv(T(G_1X_0))-G_1\lan T,\na v_0 \ran-T\lan dG_1,v_0\ran.$
			\item [(ii)] $G_1\lan d\Tr(T),v_0\ran dp'=d(G_1\Tr(T)\we \st v_0)-\Tr(T)\lan dG_1,v_0\ran dp'.$
		\end{itemize}
	\end{lemma}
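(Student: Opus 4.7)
The plan is to derive both identities as direct Leibniz/product-rule computations, done in index notation for (i) and in the language of differential forms for (ii). Since $G_1$ is multivalued in $p'$ and $v_0$ is $\MI$-valued, both sides are actually single-valued (the products involve pairs of multivalued objects with opposite monodromy), so everything can be interpreted as an identity of smooth tensors/forms on the complement of $L\cup\{p'=p\}$.

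For (i), I would work in a local orthonormal frame and expand the divergence of the vector field $W=T(G_1 X_0)$, whose components are $W^i=G_1\,T^{ij}(X_0)_j$. The Leibniz rule gives
\begin{equation*}
\divv(T(G_1 X_0))=\na_i W^i=(\na_i G_1)T^{ij}(X_0)_j+G_1(\na_i T^{ij})(X_0)_j+G_1 T^{ij}\na_i(X_0)_j.
\end{equation*}
The first summand is $T(\na G_1,X_0)$, which under the musical isomorphism equals the expression written $T\lan dG_1,v_0\ran$. The second summand is $G_1(\divv T)^j(X_0)_j=G_1\,\divv(T)(X_0)$. The third summand equals $G_1\lan T,\na v_0\ran$ once $\na X_0$ is identified with $\na v_0$ via the metric $g_0$. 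Solving the identity for $G_1\,\divv(T)(X_0)$ gives (i).

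For (ii), the key identity is $\lan\alpha,\beta\ran\,dp'=\alpha\we\st\beta$ for 1-forms $\alpha,\beta$, applied to $\alpha=d\Tr(T)$ and $\beta=v_0$. Using the Leibniz rule for the exterior derivative on the single-valued form $G_1\Tr(T)\we\st v_0$,
\begin{equation*}
d\bigl(G_1\Tr(T)\we\st v_0\bigr)=dG_1\we\Tr(T)\st v_0+G_1\,d\Tr(T)\we\st v_0+G_1\Tr(T)\,d\st v_0.
\end{equation*}
The last term vanishes because $v_0$ is $\st_{g_0}$-harmonic, so $d\st v_0=0$ on the complement of the singular locus. Translating the remaining two terms back via $\alpha\we\st\beta=\lan\alpha,\beta\ran dp'$ yields (ii).

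Neither identity is deep — both are pure Leibniz-rule manipulations, and in fact the main obstacle is purely bookkeeping: one must check that the combinations that appear (such as $G_1\,v_0$ and $G_1\Tr(T)\we\st v_0$) are genuinely single-valued, so that the classical Leibniz rules for $\divv$ and $d$ apply, and verify that the ordering of symmetric and multivalued factors matches the nonstandard bracket notation $T\lan dG_1,v_0\ran$ used in the statement. Once these conventions are fixed, the computations above finish the proof, and the lemma is then ready to be substituted into \eqref{eq_Bderivative} in the subsequent transversality argument.
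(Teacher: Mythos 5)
Your proposal is correct and follows essentially the same route as the paper: part (i) is the identical Leibniz expansion of $\divv(T(G_1X_0))$ in an orthonormal frame, and part (ii) is the same rewriting via $\lan\alpha,\beta\ran\,dp'=\alpha\we\st\beta$ together with $d\st v_0=0$ (which the paper uses implicitly). The only difference is that you make the single-valuedness bookkeeping and the use of co-closedness of $v_0$ explicit, which the paper leaves tacit.
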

	\begin{proof}
		For (i), we compute in the orthonormal frame. Let $p\in M$, we write $E_1,\cdots,E_n\in T_pM$ be a orthonormal frame with dual frame $\theta_1,\cdots,\theta_n$. Then we write 
		$$
		T=\sum_{i,j=1}^nT_{ij}\theta^i\otimes\theta^j,\;v_0=\sum_{i=1}^n\al_i\theta^i,\;X_{0}=\sum_{i=1}^n\al_i E_i,
		$$
		then $\divv(T)=\sum_{i,j=1}^n(E_iT_{ij})\theta^j$. 
		We compute
		\begin{equation}
			\begin{split}
				\divv(T(G_1X_0))&=\sum_{i,j=1}^n\divv(T_{ij}\theta^iv_0 G_1)=\sum_{i,j=1}^nE_i(T_{ij}\al_jG_1)\\
				&=\sum_{i,j=1}^n(E_iT_{ij})\al_j G_1+ T_{ij}(E_i\al_j)G_1+T_{ij}\al_j(E_iG_1)\\
				&=G_1(\divv(T)X_0)+G_1\lan T,\na v_0\ran+T\lan dG_1,v_0\ran.
			\end{split}
		\end{equation}
		For (ii), let $\st$ be the Hodge star operator of $g_0$, then we compute
		\begin{equation}
			\begin{split}
				G_1\lan d\Tr(T),v_0\ran dp'=G_1 d\Tr(T)\we \star v_0=d(G_1\Tr(T)\we \st v_0)-\Tr(T)dG_1\we\st v_0.
			\end{split}
		\end{equation}
	\end{proof}
	
	Let $U_{l}$ be any small neighborhood of $t\in L \subset M$, then, for $p'\in U_{l}^{c}$, $G_1(t,p')$ is a smooth function. We denote $\supp$ the support of $T$ and suppose $\supp$ lies on $U_l^{c}$, then we don't need to worry about the boundary terms coming from the singularity of $G_1$ during the integral by part processes. 
	
	By Lemma \ref{lem_formula}, \eqref{eq_Bderivative} could be rewritten as
	\begin{equation}
		\frac{d}{ds}|_{s=0}B_s(t)=\int_MG_1 \dot{\MD} v_0dp'=\int_M (T(\na G_1,v_0)-\Tr(T)\lan dG_1,v_0 \ran)dp'.
	\end{equation}
	If we define $S:=\frac12(dG_1\otimes v_0+v_0\otimes dG_1)$, which is a symmetric (0,2) tensor. In addition, as $G_1$ and $v_0$ are both multivalued, $S$ is no longer multivalued. As $T,S$ are symmetric $(0,2)$ tensors, we could use the Riemannian metric $g_0$ to obtain a $(1,1)$ tensor and $TS$ is the composition of $(1,1)$ tensor, which is also a $(1,1)$ tensor. We could write $T(\na G,v_0)=\Tr(TS)$ and $\lan dG_1,v_0\ran=\Tr(S)$. In a orthonormal frame, we might regard $T$ and $S$ as a $n\ti n$ matrix and $\Tr(TS)$ is understood as trace of the matrix multiplication of $T$ and $S$.
	
	Therefore, we obtain 
	$$
	\frac{d}{ds}|_{s=0}B_s(t)=\int_M(\Tr(TS)-\Tr(T)\Tr(S))dp'.
	$$
	
	\begin{lemma}
		\label{lem_contraction}
		Suppose for any $T$ with $\supp\subset U_l^c$, we have $$\int_M(\Tr(TS)-\Tr(T)\Tr(S))dp'=0,$$ then $S=0$ over $U_l^c$.
	\end{lemma}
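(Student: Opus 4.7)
The plan is to use the freedom in choosing $T$ to extract a pointwise identity for $S$, and then invoke elementary linear algebra. First I would localize: for any $p_0 \in U_l^c$ and any symmetric $(0,2)$-tensor $T_0$ at $p_0$, extend $T_0$ smoothly to a small neighborhood $V \subset U_l^c$ of $p_0$, multiply by a bump function $\phi$ supported in $V$, and feed $T := \phi T_0$ into the integral hypothesis. Letting $V$ shrink to $\{p_0\}$ and invoking the fundamental lemma of the calculus of variations would yield the pointwise identity
$$
\Tr\bigl(T_0(p_0)\, S(p_0)\bigr) - \Tr\bigl(T_0(p_0)\bigr)\, \Tr\bigl(S(p_0)\bigr) = 0,
$$
valid for every symmetric $T_0(p_0)$. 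Rewriting this as $\Tr\bigl(T_0 \cdot (S(p_0) - \Tr(S(p_0))\, g_0|_{p_0})\bigr) = 0$, non-degeneracy of the trace pairing on symmetric matrices forces $S(p_0) = \Tr(S(p_0))\, g_0|_{p_0}$; taking the trace of both sides gives $\Tr(S(p_0)) = n\,\Tr(S(p_0))$ with $n = \dim M \geq 2$, hence $\Tr(S(p_0)) = 0$ and thus $S(p_0) = 0$.

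If in the surrounding variational context the test tensors $T$ are required to be $\ZH$-invariant (since the family $g_s$ consists of $\ZH$-invariant metrics), I would replace $\phi T_0$ by the symmetrization $\phi T_0 + \tau^{\st}(\phi T_0) + (\tau^2)^{\st}(\phi T_0)$. For $p_0 \in U_l^c \setminus L$ the orbit $\{p_0, \tau(p_0), \tau^2(p_0)\}$ consists of three distinct points, and after shrinking $V$ one can arrange $V$, $\tau V$, $\tau^2 V$ to be pairwise disjoint and contained in $U_l^c$. Since $S$, the volume form, and the metric used in $\Tr$ are all $\ZH$-invariant (and $\tau$ preserves orientation, as it must because $\tau^3 = \Id$ acts on the normal complex line at $L$ by multiplication by $e^{2\pi i/3}$), the three orbit contributions to the integral coincide, and the pointwise identity at $p_0$ follows exactly as before; the conclusion on $L \cap U_l^c$ is then obtained by continuity.

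The argument is essentially a distributional reformulation of a linear-algebra identity, so no serious obstacle is expected. The one point worth checking briefly is that $S$ is smooth on $U_l^c$ so the pointwise conclusion is meaningful: the Green kernel coefficient $G_1(t,\cdot)$ is singular only at the point $p' = (0,t) \in U_l$, so $dG_1$ is smooth on $U_l^c$; $v_0$ is smooth on $M \setminus L$; and single-valuedness of $S = \tfrac12(dG_1 \otimes v_0 + v_0 \otimes dG_1)$ (both factors acquire $-1$ under the double-cover monodromy) makes the conclusion $S \equiv 0$ on $U_l^c$ unambiguous.
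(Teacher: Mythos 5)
Your argument is correct and is essentially the paper's: both exploit the freedom in $T$ to reduce the integral identity to the nondegeneracy of the trace pairing on symmetric tensors plus the observation that $\Tr(S)=n\Tr(S)$ forces $\Tr(S)=0$; the paper does this with the global test tensors $T=\chi^2\mathring{S}$ and $T=\chi^2\Tr(S)I$ after a traceless/trace decomposition, while you localize to a point first, which is an immaterial difference. (Your optional remark about $\ZH$-averaging the test tensors is outside the lemma as stated, and note that the three orbit contributions need not literally coincide since $G_1$ transforms by a phase under $\tau$ rather than invariantly.)
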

	\begin{proof}
		As $T$ is a real valued $(0,2)$ tensor, we only need to show the lemma holds for a real valued $S$. As $g_0$ induces an isomorphism between $T^{\st}M$ and $TM$, we obtain a canonical $(1,1)$ tensor $I$. We define $\mathring{T}=T-\frac{\Tr(T)}{n}I$ and $\mathring{S}=S-\frac{\Tr(S)}{n}I$, which will be the traceless part of $T$ and $S$. Then we have 
		$$\Tr(TS)=\Tr(\mathring{T}\mathring{S})+\frac{1}{n}\Tr(T)\Tr(S).$$
		Therefore, we obtain
		$$
		\int_M(\Tr(\mathring{T}\mathring{S})+\frac{n-1}{n}\Tr(T)\Tr(S)) dp'=0,
		$$
		for any $T$. Let $\chi:U_l^{c}\to [0,1]$ be a cut-off function with $\chi|_{\pa U_l^{c}}=0$, then we first take $T=\chi^2 \mathring{S}$. Then we obtain $\int_M\Tr(\chi\mathring{S}\chi\mathring{S})=0$. As $\Tr(\;)$ is the inner product for symmetric $(0,2)$-tensor, by different choices of $\chi$, we first obtain $\mathring{S}=0$ over $U_l^{c}$.  We take $T=\chi^2\Tr(S)I$, then $\int_M\Tr(S)^2\chi^2dp'=0$, which implies $\Tr(S)=0$ over $U_l^{c}$. In summary, over $U_l^{c}$, we obtain $S=0$.
	\end{proof}
	
	\begin{proposition}
		\label{prop_transversality}
		There exists $T$ with support on $M\setminus \{t\}$ such that $\frac{d}{ds}|_{s=0}\Re{B}_{s}(t)\neq 0$ or $\frac{d}{ds}|_{s=0}\Im{B}_{s}(t)\neq 0$.
	\end{proposition}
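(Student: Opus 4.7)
The plan is to argue by contradiction via the contrapositive of Lemma \ref{lem_contraction}. The crucial preliminary observation is that the tensor $S=\frac{1}{2}(dG_1\otimes v_0+v_0\otimes dG_1)$ is $\tau$-invariant. Indeed, $v_0$ is $\tau$-invariant by Theorem \ref{main_theorem}(i), and $G_1$ is $\tau$-invariant as a section of the appropriate flat bundle: the local coordinate phase arising from $\tau:z\mapsto e^{2\pi i/3}z$ near $L$ is absorbed into the change of trivialization, consistent with the fact that the formula $\frac{d}{ds}|_{s=0}B_s(t)=\int_M G_1\,\dot{\MD} v_0\,dp'$ must take arbitrary complex values over $\ZH$-invariant metric perturbations.

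Suppose for contradiction that $\ell(T):=\frac{d}{ds}|_{s=0}B_s(t)=0$ for every $\ZH$-invariant symmetric tensor $T$ supported in $U_l^c$, where $U_l$ is a small $\ZH$-invariant neighborhood of $l=t\in L$. For an arbitrary (not necessarily invariant) symmetric $T$ with support in $U_l^c$, the $\ZH$-symmetrization $T^{\sharp}=\frac{1}{3}\sum_{k=0}^{2}(\tau^k)^{\st}T$ is $\ZH$-invariant and supported in $U_l^c$. Using the $\tau$-invariance of $S$, of $g_0$, and of the volume form, a change of variables gives $\ell(T^{\sharp})=\ell(T)$ for every such $T$. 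By hypothesis $\ell(T^{\sharp})=0$, so $\ell(T)=0$ for every symmetric $T$ supported in $U_l^c$, and Lemma \ref{lem_contraction} forces $S\equiv 0$ on $U_l^c$.

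To derive a contradiction from $S\equiv 0$, I would use the elementary algebraic fact that a symmetric product of covectors $\omega_1\otimes\omega_2+\omega_2\otimes\omega_1$ vanishes at a point only if one of the covectors vanishes. Combined with unique continuation for the multivalued harmonic 1-form $v_0$ — which is nontrivial because $\chi\neq 0$, so that $\{v_0\neq 0\}$ is open and dense — this yields $dG_1\equiv 0$ on $U_l^c$. Hence $G_1$ is a parallel section of the flat line bundle $\MI$ (complexified) over $U_l^c$; but $\MI|_{U_l^c}$ retains its $-1$ monodromy around loops linking $L\setminus U_l$, so it admits no nonzero parallel section, forcing $G_1\equiv 0$ on $U_l^c$. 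Unique continuation for the harmonic function $G_1(t,\cdot)$ on $M\setminus\{l\}$ then propagates this to $G_1\equiv 0$, contradicting the nonremovable singularity of $G_1$ at $l$ (explicit from the Bessel-function Green kernel in the model case, cf.\ \cite{donaldson12kahlermetric,donaldson2019deformations}).

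The main obstacle is the very first step: rigorously verifying that $G_1$, as a globally defined section of the relevant flat bundle, is genuinely $\tau$-invariant. This requires bookkeeping of the flat line bundle $\MI$ together with fractional powers of the normal bundle $N$ and checking that the apparent coordinate phases $e^{\pm i\pi/3}$ are merely changes of trivialization. Once invariance is secured, the pointwise algebraic argument and the unique continuation steps are essentially routine, and the same $T$ producing a nonzero $\ell$ gives either $\frac{d}{ds}|_{s=0}\Re B_s(t)\neq 0$ or $\frac{d}{ds}|_{s=0}\Im B_s(t)\neq 0$.
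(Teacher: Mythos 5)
Your proposal is correct in substance and shares its first half with the paper: assume the derivative vanishes for all admissible $T$ and invoke Lemma \ref{lem_contraction} to conclude $S\equiv 0$ on $U_l^c$. Two differences are worth recording. First, you insert a symmetrization step $T\mapsto T^{\sharp}$ to justify feeding arbitrary (not just $\ZH$-invariant) tensors into Lemma \ref{lem_contraction}; the paper passes over this point silently, so this is a genuine refinement --- but note that with the paper's literal definition $G_1=a_{1,0}$ (the coefficient of $e^{i\theta/2}$), $G_1$ transforms under $\tau$ by a primitive sixth root of unity $\zeta$ rather than being invariant, so one gets $\ell(T^{\sharp})=\tfrac{1}{3}(1+\zeta+\zeta^2)\,\ell(T)$ instead of $\ell(T^{\sharp})=\ell(T)$; the conclusion survives since $1+\zeta+\zeta^2\neq 0$, and your heuristic that the kernel extracting the $z^{3/2}$-coefficient must be $\tau$-invariant in the multivalued sense (because $z^{3/2}\mapsto -z^{3/2}$ is trivial there) is the right way to resolve the bookkeeping you flag. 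Second, your endgame runs the pointwise dichotomy from $S\equiv 0$ in the opposite direction from the paper: the paper uses the explicit asymptotics of $G_1$ near $L$ (Donaldson's Equation (12)) to see $\partial_{x_i}\Re G_1\neq 0$ there, forces $v_0\equiv 0$ on an open set, and contradicts unique continuation for $v_0$; you instead use density of $\{v_0\neq 0\}$ (which already requires unique continuation for $v_0$, so make that explicit) to force $dG_1\equiv 0$, then kill $G_1$ via the $-1$ monodromy of $\MI$ and contradict its singularity at $l$. Both endgames work; the paper's needs the precise local form of $G_1$ near $L$ but only one unique-continuation input, while yours needs only that $G_1$ has a genuine singularity at $l$, at the cost of the monodromy observation and either unique continuation for $G_1(t,\cdot)$ (which requires knowing it is harmonic in $p'$ away from $l$ --- true but unstated) or, more cleanly, shrinking $U_l$ so that $S\equiv 0$ on all of $M\setminus\{l\}$ and reading off $G_1\equiv 0$ there directly.
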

	\begin{proof}
		We prove by contradiction. Suppose $\frac{d}{ds}|_{s=0}\Re{B}_{s}(t)$=0, then by Lemma \ref{lem_contraction}, we obtain $\Re{S}(p)=0$ for any $p\in U_l^c$. Let $(x_1,\cdots, x_n)$ be a coordinate on a ball $B_p$ centered at $p$, we write $v_0=\sum_{i=1}^n\al_i dx_i$, then $\Re S(p)=0$ implies $\pa_{x_i}\Re G_1\otimes \al_i=0$ over $B_p$ for $i=1,\cdots,n$. However, by \cite[Equation (12)]{donaldson12kahlermetric}, there is an explicit expression of leading asymptotic of $G_1$ along $L$ such that for $p$ sufficient close to $L$, we have $\pa_{x_i}\Re G_1\neq 0$. Therefore, over $B_p\cap U_l^c$, we obtain $v_0=0$. We choose any $B'\subset B_p\cap U_l^c$ such that $B'\cap L=\emptyset$, then $p^{\st}v_0$ over $p^{\st}B'$ is a smooth harmonic 1-form. As $B'\cap L=\emptyset$, the pull-back metric over $p^{\st}B'$ is also smooth. By the unique continuation theorem of elliptic equation \cite{aronszajn57}, $p^{\st}v_0$ over $p^{-1}(B_p\cap U_l^c)$ could not identically zero, which gives a contradiction. 
		
		As $\pa_{x_i}\Im G_1\neq 0$ on $p$ close to $L$, by the same argument, we could find $T$ such that $\frac{d}{ds}|_{s=0}\Im{B}_{s}(t)\neq 0$.
	\end{proof}
	
	\begin{proposition}
		\label{prop_trans}
		Let $(L,\MI,v)$ be the bounded multivalued harmonic 1-form constructed in Theorem \ref{main_theorem} (i) and $B_v(t)$ be the leading term of $v$, suppose $N$ is a trivial bundle, then for generic $\ZH$ invariant smooth metric, $B_v^{-1}(0)$ is a $n-4$ dimensional submanifold of $L$.
	\end{proposition}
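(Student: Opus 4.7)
The plan is to apply a standard Sard-Smale parametric transversality argument, with Proposition \ref{prop_transversality} serving as the infinitesimal input. Fix a large integer $k$ and let $\MM$ denote the Banach space of $\ZH$-invariant $\MC^k$ Riemannian metrics on $M$. For each $g\in\MM$, Theorem \ref{main_theorem} (i) produces a bounded multivalued harmonic 1-form $v_g$ with $[p^{\st}v_g]=\chi$, whose leading coefficient $B_{v_g}\in \MC^\infty(L;\mathbb{C})$ is well defined since $N$ is trivial. Standard elliptic regularity for the Lipschitz Hodge Laplacian on $\tM$ (as in the sources cited in the paper) shows that $g\mapsto v_g$ and hence $g\mapsto B_{v_g}$ depend smoothly on $g$. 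Introduce the universal evaluation map
\[
\MB:\MM\times L\longrightarrow \mathbb{C},\qquad \MB(g,t)=B_{v_g}(t).
\]

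The central step is to show that $0\in\mathbb{C}$ is a regular value of $\MB$, i.e.\ that at any $(g_0,t_0)\in\MB^{-1}(0)$ the real-linear partial derivative $D_g\MB|_{(g_0,t_0)}:T_{g_0}\MM\to\mathbb{C}$ is surjective. The derivative formula \eqref{eq_Bderivative} identifies this derivative with the functional analyzed in Proposition \ref{prop_transversality}. That proposition supplies tensors $T_1,T_2$ with $\frac{d}{ds}|_{s=0}\Re B_s(t_0)\neq 0$ and $\frac{d}{ds}|_{s=0}\Im B_s(t_0)\neq 0$ respectively, and a direct extension of the argument — choosing the localizing ball $B'\subset B_p\cap U_l^c$ at two distinct points at which $\Re G_1$ and $\Im G_1$ have linearly independent gradients, which is ensured by the Bessel-function expansion of $G_1$ from \cite{donaldson12kahlermetric} — produces $T_1,T_2$ for which $(\dot B(T_1),\dot B(T_2))$ is a basis of $\mathbb{C}$ over $\mathbb{R}$. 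A minor technical point is that perturbations must be $\ZH$-invariant to lie in $T_{g_0}\MM$; since $t_0\in L=\mathrm{Fix}(\tau)$, $g_0$ is $\ZH$-invariant, and $v_0$ is $\ZH$-invariant, averaging any perturbation $T$ over the $\ZH$-action yields a $\ZH$-invariant tensor whose contribution to $\dot B(t_0)$ can be controlled by the same unique-continuation argument used to prove Proposition \ref{prop_transversality}.

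With surjectivity established, $\MZ:=\MB^{-1}(0)$ is a Banach submanifold of $\MM\times L$ of codimension $2$. The projection $\pi:\MZ\to\MM$ is Fredholm of index $\dim L-2=(n-2)-2=n-4$, so by the Sard-Smale theorem its set of regular values $\MM^{\mathrm{reg}}\subset\MM$ is residual. For each $g\in\MM^{\mathrm{reg}}$, the fiber $\pi^{-1}(g)=B_{v_g}^{-1}(0)\subset L$ is a smooth submanifold of dimension $n-4$ (interpreted as empty when $n\leq 3$, which is exactly the nondegeneracy statement for the 3-dimensional case of Theorem \ref{main_theorem} (ii)). The passage from generic $\MC^k$ metrics to generic smooth metrics proceeds by the standard Taubes-type trick: intersect the $\MM^{\mathrm{reg}}$'s across a nested sequence of $\MC^k$-completions as $k\to\infty$.

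The main obstacle is the strengthening described in the second paragraph, namely verifying that the two perturbations producing nonzero real and nonzero imaginary variations of $B(t_0)$ can be chosen so that the associated pair in $\mathbb{C}$ is linearly independent over $\mathbb{R}$ (rather than lying on a common line through the origin), while simultaneously being $\ZH$-invariant. The asymptotic expansion of $G_1$ along $L$ decouples the real and imaginary parts of $S=\tfrac12(dG_1\otimes v_0+v_0\otimes dG_1)$, so this decoupling can be exploited at points near $L$ by choosing supports of $T_1$ and $T_2$ in two small balls on which $\Re S$ and $\Im S$ are not proportional; the rest of the argument is then a routine application of parametric transversality.
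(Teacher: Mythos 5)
Your proof follows essentially the same route as the paper's: the paper simply defines $\MF(t,g)=B_g(t)$ on $L\times\MM$, invokes Proposition \ref{prop_transversality} for transversality to $\{0\}\subset\mathbb{C}$, and applies the Sard--Smale theorem to conclude. The additional care you take --- verifying that the metric-derivative of $B$ actually spans $\mathbb{C}$ over $\mathbb{R}$ rather than merely having nonzero real and imaginary components under two separate perturbations, and that the perturbations can be arranged to be $\ZH$-invariant --- addresses points the paper leaves implicit, and your proposed fixes (using the Bessel-function asymptotics of $G_1$ and averaging over the $\ZH$-action) are consistent with the mechanism of Lemma \ref{lem_contraction} and Proposition \ref{prop_transversality}.
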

	\begin{proof}
		We define the map $$\MF:L\ti \MM \to \mathbb{C},\;\MF(t,g):=B_g(t).$$ By Proposition \ref{prop_transversality}, $\MF$ is transverse to $\{0\}\subset \mathbb{C}$. Therefore, by Sard-Smale theorem, for generic $g\in \MM$, $B_g^{-1}(0)$ is a n-4 dimensional submanifold.
	\end{proof}
	
	Proof of Theorem \ref{main_theorem} (ii): as every complex line bundle is trivial over a 1-dimensional manifold, the claim follows from Proposition \ref{prop_trans}.
	
	\section{Examples of $\ZH$ symmetric triples}

	In this section, we will construct examples of $\ZH$ symmetric triples using the cyclic branched covering. For the construction of cyclic branched covering and Alexander polynomial, we refer \cite{lickorish1997introducetoknottheory,rolfsen90knotsandlinks} for more details.
	
	\begin{proposition}
		Let $M$ be a rational homology 3-sphere, $L$ be a oriented link on $M$. Let $M_k$ be the $k$-cyclic branched covering of $M$ along $L$. Suppose the 1st Betti number $b_1(M_2)\neq 0$, then there exists $\chi\in H^1(M_6;\mathbb{R})$ such that $(M_6,M_3,\chi)$ is a $\ZH$ symmetric triple.
	\end{proposition}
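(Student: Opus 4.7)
The plan is to construct the $\ZH$-triple directly from the tower of cyclic branched covers $M_k\to M$ and to produce $\chi$ by a transfer argument from $H^1(M_2;\mathbb{R})$.

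First I would set up the tower. Since $6=2\cdot 3$ and $\mathbb{Z}_6$ is abelian, the deck group of $p_6:M_6\to M$ splits canonically as $\mathbb{Z}_6\cong\ZT\oplus\ZH$. I would take $\sigma:M_6\to M_6$ to generate the $\ZT$ factor and $\ttau:M_6\to M_6$ to generate the $\ZH$ factor, so that $\sigma\circ\ttau=\ttau\circ\sigma$, $M_6/\langle\sigma\rangle\cong M_3$, and $M_6/\langle\ttau\rangle\cong M_2$. Under the quotient $p_3:M_6\to M_3$ the element $\ttau$ descends to the generator $\tau$ of the $\ZH$ deck group of $M_3\to M$, whose fixed locus is the branch set, a copy of $L\subset M_3$. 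This realizes the topological data required by Definition \ref{assumption}(i), taking $\tM=M_6$.

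Next I would produce $\chi$ by transfer. Let $q_2:M_6\to M_2$ be the quotient by $\langle\ttau\rangle$, and $\bar\sigma:M_2\to M_2$ the induced $\ZT$ deck transformation of $M_2\to M$. Since $\sigma$ and $\bar\sigma$ correspond to the same nontrivial coset of $\ZH$ in $\mathbb{Z}_6$, one has $q_2\circ\sigma=\bar\sigma\circ q_2$, so $q_2^*$ intertwines $\bar\sigma^*$ with $\sigma^*$. The transfer for the degree-$3$ cover gives an injection
\begin{equation*}
q_2^*:H^1(M_2;\mathbb{R})\hookrightarrow H^1(M_6;\mathbb{R})
\end{equation*}
with image equal to the $\ttau^*$-invariant subspace. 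Because $M$ is a rational homology $3$-sphere, $H^1(M_2;\mathbb{Q})^{\bar\sigma}\cong H^1(M;\mathbb{Q})=0$, and since $\bar\sigma$ has order $2$ this forces $\bar\sigma^*=-\id$ on all of $H^1(M_2;\mathbb{R})$. The hypothesis $b_1(M_2)\neq 0$ then furnishes $0\neq\eta\in H^1(M_2;\mathbb{R})$, and $\chi:=q_2^*\eta$ is a nonzero class satisfying $\sigma^*\chi=-\chi$ and $\ttau^*\chi=\chi$. This verifies Definition \ref{assumption}(ii) and completes the triple.

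The argument is essentially bookkeeping on the abelian group $\mathbb{Z}_6$ together with a standard transfer, and the analytic content is outsourced to Theorem \ref{main_theorem}. The only point deserving care is that the covers $M_k\to M$ are branched rather than unramified along $L$, so I would need to invoke both the identification $H^*(X/G;\mathbb{Q})\cong H^*(X;\mathbb{Q})^G$ and the injectivity of $q_2^*$ in the branched setting. This is standard since the branch locus has codimension $2$ and the deck actions are effective with finite isotropy, so the usual rational-coefficient transfer carries over from the unramified case.
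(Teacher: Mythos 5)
Your proposal is correct and follows essentially the same route as the paper: both take $\sigma=t^3$, $\ttau=t^2$ in the $\mathbb{Z}_6$ deck group, pull a class back from $M_2$ to guarantee $\ttau^*$-invariance, and use $H^1(M;\mathbb{R})=0$ to kill the $\sigma^*$-invariant part (you do this downstairs by showing $\bar\sigma^*=-\id$ on $H^1(M_2;\mathbb{R})$, the paper does it upstairs by decomposing $\chi=\chi^++\chi^-$ on $M_6$ and observing $\chi^+$ descends to $M$). Your explicit appeal to the injectivity of the transfer, which guarantees $\chi\neq 0$, is a small point the paper leaves implicit.
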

	\begin{proof}
		By construction of the cyclic branched covering, over $M_6$, there exists an cyclic action $t:M_6\to M_6$ with $t^6=\Id$. We write $\sigma=t^3$ and $\tau=t^2$, then $M_3=M_6/<\sigma>$ and $M_2=M_6/<\tau>$. We write $p_3:M_6\to M_3$ and $p_2:M_6\to M_2$ be the quotient maps, which we make a diagram below to explain their relationship,
		
		\begin{center}
		\begin{tikzpicture}
			\matrix (m) [matrix of math nodes,row sep=3em,column sep=4em,minimum width=2em]
			{
				M_6 & M_3:=M_6/\lan \sigma \ran \\
				\al\in M_2:=M_6/\lan \tau \ran & M=M_6/\lan t\ran \\};
			\path[-stealth]
			(m-1-1) edge node [left] {$p_2$} (m-2-1)
			edge node [above] {$p_3$} (m-1-2)
			(m-2-1.east|-m-2-2) edge node [below] {}
			node [above] {} (m-2-2)
			(m-1-2) edge node [right] {} (m-2-2);
		\end{tikzpicture}
	\end{center}
		
		Let $\al\in H^1(M_2;\mathbb{R})$, we define $\chi=p_2^{\st}\al\in H^1(M_6;\mathbb{R})$. As $p_2\circ\tau=p_2$, $\tau^{\st}\chi=\chi.$ $\sigma$ induces a decomposition $H^1(M_6;\mathbb{R})=H^1_-(M_6;\mathbb{R})\oplus H^1_+(M_6;\mathbb{R})$, which we could write $\chi=\chi^-+\chi^+$ with $\sigma^{\st}\chi^{\pm}=\pm\chi^{\pm}$. As $\tau\sigma=\sigma\tau$, we have $\sigma^{\st}\tau^{\st}\chi^{\pm}=\pm \tau^{\st}\chi^{\pm}$, which implies $\tau^{\st}\chi^+=\chi^+$. 
		
		As $\tau$ and $\sigma$ generates all $\mathbb{Z}_6$ action on $M_6$, we have $t^{\st}\chi^+=\chi^+$. Therefore, $\chi^+$ is the pull-back of an element of $H^1(M;\mathbb{R})$, which vanishes as $M$ is a rational homology 3-sphere. Therefore, $\sigma^{\st}\chi=-\chi$ and $(M_6,M_3,\chi)$ is a $\mathbb{Z}_3$ symmetric triple satisfies Assumption \ref{assumption}.
	\end{proof}
	
	Let $M=S^3$ and $L$ be an oriented link on $M$ with $n$ components, then the Alexander polynomial is an oriented link invariant, which could be written as
	$$\Delta_L(t)=\det(tS-S^{\perp}),$$
	where $S$ is the Seifert matrix for $L$. In addition, when $t=-1$, $\det(L)=\Delta_L(-1)$ is called the determinant of $L$.
	
	Let $S_k(L)$ be the k-cyclic branched covering of $L$, then Alexander polynomial characterize the order of the 1st homology of the cyclic branched cover.
	\begin{theorem}{\cite{lickorish1997introducetoknottheory}}
		The order of the first homology group of $S_k^3(L)$ is given by
		$$
		|H_1(S_k(L))|=|\prod_{r=1}^{k-1}\Delta_L(e^{\frac{2\pi i r}{k}})|,
		$$
		where $|.|$ means the order a group which is defined to the zero if the group has infinite order.
	\end{theorem}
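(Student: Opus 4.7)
The plan is to realize $|H_1(S_k(L))|$ as the absolute value of the determinant of a block-circulant integer matrix built from a Seifert matrix of $L$, then diagonalize that matrix using the cyclic-group Fourier transform and identify the resulting $k$ eigenvalue-blocks with evaluations of $\Delta_L$ at the $k$-th roots of unity.

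First I would fix a Seifert surface $F$ for $L$ with associated $2g\times 2g$ Seifert matrix $S$, so that $\Delta_L(t)=\det(tS-S^T)$ up to units in $\Lambda:=\mathbb{Z}[t,t^{-1}]$. Setting $X:=S^3\setminus \nu(L)$, I would construct the infinite cyclic cover $\tilde X_\infty\to X$ by cutting $X$ along $F$ and stacking countably many copies; a standard Mayer--Vietoris argument then shows that the Alexander module $H_1(\tilde X_\infty;\mathbb{Z})$ admits the square $\Lambda$-presentation $tS-S^T$. Next I would relate $S_k(L)$ to $\tilde X_\infty$: the $k$-fold cyclic branched cover is obtained from the unbranched $k$-fold cover $X_k$ by Dehn-filling the preimage of the boundary tori along meridians of $L$, and translating this to homology gives
\begin{equation*}
H_1(S_k(L);\mathbb{Z}) \;\cong\; H_1(\tilde X_\infty;\mathbb{Z})\big/ (t^k-1).
\end{equation*}

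Tensoring the presentation $tS-S^T$ with $\Lambda/(t^k-1)$ yields a $2gk\times 2gk$ block-circulant presentation matrix over $\mathbb{Z}$, of the form $C_k\otimes S - I_k\otimes S^T$, where $C_k$ is the $k\times k$ cyclic shift. When this matrix has nonzero determinant, its absolute value equals $|H_1(S_k(L))|$; when the determinant vanishes, the first homology is infinite and by the theorem's convention the right-hand side is declared zero. Diagonalizing the block circulant by the discrete Fourier transform over $\mathbb{Z}/k$ turns it into the block-diagonal matrix with blocks $\zeta^r S - S^T$ for $\zeta=e^{2\pi i/k}$, $r=0,\dots,k-1$, whose determinants are exactly $\Delta_L(\zeta^r)$.

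It remains to account for the $r=0$ factor. For a knot $\Delta_L(1)=\pm 1$, while for a multi-component link $\Delta_L(1)=0$; in either case the $r=0$ eigenblock corresponds precisely to the meridional direction that is killed by the branch-Dehn-filling in the passage from $X_k$ to $S_k(L)$, so after removing this factor one obtains the stated product over $r=1,\dots,k-1$. The main obstacle I expect is the bookkeeping in this last step: one must show rigorously that the free part of $H_1(\tilde X_\infty)/(t^k-1)$ coming from the $r=0$ block is exactly the subgroup killed by the branched filling, and handle the multi-component case where the meridians of different components of $L$ give several independent classes. This is where the difference between "$\det\neq 0$" and "$\det=0$" is reconciled with finiteness of $H_1(S_k(L))$, and careful use of the long exact sequence of the pair $(S_k(L),\text{branch locus})$ together with Poincar\'e--Lefschetz duality resolves it.
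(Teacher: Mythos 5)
The paper does not prove this statement; it is quoted verbatim from the cited reference \cite{lickorish1997introducetoknottheory}, so your argument has to be measured against the standard proof there. Your overall strategy (Seifert matrix, block-circulant presentation, diagonalization by the discrete Fourier transform over $\mathbb{Z}/k$) is the classical one and works for knots, but there is a genuine gap in the multi-component case --- and links with $\det(L)=0$, the only case this paper ever uses the theorem for, always have at least two components. The displayed isomorphism $H_1(S_k(L);\mathbb{Z})\cong H_1(\tilde X_\infty;\mathbb{Z})/(t^k-1)$ is false for links: for the Hopf link, $H_1(\tilde X_\infty)\cong \Lambda/(t-1)\cong\mathbb{Z}$ with trivial $t$-action, so $H_1(\tilde X_\infty)/(t^k-1)\cong\mathbb{Z}$ for every $k$, whereas $S_2(L)\cong \mathbb{R}P^3$ has $H_1\cong\mathbb{Z}/2$. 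Equivalently, the full $2gk\times 2gk$ block circulant you write down has determinant $\pm\prod_{r=0}^{k-1}\Delta_L(\zeta^r)$, and the $r=0$ factor $\Delta_L(1)=\det(S-S^T)$ is identically zero for any link with $\mu\geq 2$ components, since the intersection form of a connected Seifert surface with $\mu$ boundary circles is degenerate. So your dichotomy (``nonzero determinant gives $|H_1|$, zero determinant gives infinite $H_1$'') returns the wrong answer in exactly the relevant case, and the step you defer as ``bookkeeping'' --- removing the $r=0$ block after the fact --- is the crux: one cannot recover a finite order by dividing out a factor that equals zero.

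The repair, which is how the proof in \cite{lickorish1997introducetoknottheory} actually runs, is to never form the full $k$-block circulant. Build $S_k(L)$ directly from $k$ copies of $S^3$ cut along a Seifert surface and apply Mayer--Vietoris to that decomposition; one block of generators and one block of relations are redundant, and one obtains a presentation of $H_1(S_k(L))$ by a $(k-1)\times(k-1)$ block integer matrix (for $k=2$ this is the familiar presentation by $S+S^T$). Its determinant equals $\pm\prod_{r=1}^{k-1}\det(\zeta^r S-S^T)=\pm\prod_{r=1}^{k-1}\Delta_L(\zeta^r)$ by the blockwise version of the identity $\prod_{r=1}^{k-1}(x-\zeta^r y)=x^{k-1}+x^{k-2}y+\cdots+y^{k-1}$, so the product over $r=1,\dots,k-1$ appears intrinsically, with no $r=0$ factor ever entering, and knots and links are handled uniformly. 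If you want to keep your infinite-cyclic-cover framing, you must replace the displayed isomorphism by the correct statement that $H_1(S_k(L))$ is the quotient of $H_1(X_k)$ by the $\mu$ meridian classes and track those classes through Milnor's exact sequence, which is substantially more delicate than the sentence you allot to it.
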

	In particular, when $k=2$, $|H_1(S_2(L))|=|\Delta_L(-1)|=|\det(L)|$. In particular, $b_1(S_2(L))>0$ if and only if $\det(L)=0$. As the determinant for a knot is also always not zero, therefore, a multivalued harmonic 1-form could only branched over a link with more than one components, which is first observed by Haydys \cite{haydys2020seiberg}.
	
	In summary, we obtain the following existence result.
	\begin{corollary}
		\label{cor}
		Let $L$ be an oriented link on $S^3$ with $\det(L)=0$, then over the 3-cyclic branched covering $S_3(L)$, there exists a bounded multivalued harmonic 1-form.
	\end{corollary}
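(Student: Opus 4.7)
The plan is to chain the Alexander-polynomial computation, the preceding proposition on cyclic branched covers, and Theorem \ref{main_theorem}(i).

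First, I would specialize $k=2$ in the quoted formula $|H_1(S_k(L))|=|\prod_{r=1}^{k-1}\Delta_L(e^{2\pi i r/k})|$ to obtain $|H_1(S_2(L))|=|\Delta_L(-1)|=|\det(L)|$. Since $\det(L)=0$ by hypothesis, the stated convention (order $0$ encoding an infinite group) forces $H_1(S_2(L);\mathbb{Z})$ to be infinite, hence $b_1(S_2(L))>0$.

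Second, with $M=S^3$, which is a rational homology 3-sphere, the preceding proposition applies and supplies a class $\chi\in H^1(S_6(L);\mathbb{R})$ such that $(S_6(L),S_3(L),\chi)$ is a $\ZH$-symmetric triple in the sense of Definition \ref{assumption}. Before invoking the existence theorem I would verify that $\chi$ can be taken nonzero: the construction gives $\chi=p_2^{\st}\alpha$ for $\alpha\in H^1(S_2(L);\mathbb{R})$, and since $p_2:S_6(L)\to S_2(L)$ is a finite (possibly branched) cyclic covering, the usual transfer argument shows $p_2^{\st}$ is injective on real cohomology, so any nonzero $\alpha$ (which exists by step one) produces a nonzero $\chi$. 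After the decomposition in the proposition kills the $\sigma$-invariant part (because $S^3$ has trivial first cohomology), we conclude $\chi\in H^1_-(S_6(L);\mathbb{R})\setminus\{0\}$.

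Third, I would pick a $\ZH$-invariant smooth Riemannian metric on $S_3(L)$ (obtained by averaging any smooth metric over the $\ZH$-action descending from $\tau$) and apply Theorem \ref{main_theorem}(i) to the triple $(S_6(L),S_3(L),\chi)$. This produces a bounded multivalued harmonic 1-form $(L,\MI,v)$ on $S_3(L)$ with $[p^{\st}v]=\chi$, and since $\chi\neq 0$ the form $v$ is not identically zero, giving the required bounded (in fact $\ZT$-) harmonic 1-form.

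The main point that requires honest checking is the nonvanishing of $\chi$, since Theorem \ref{main_theorem}(i) in itself only produces a form whose pullback represents the prescribed class---if one fed in a trivial $\chi$ one would get the zero form. Once injectivity of the pullback under the cyclic branched cover $p_2$ is noted, everything else is a formal assembly of previously established results.
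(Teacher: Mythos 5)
Your proof is correct and follows essentially the same route as the paper, which obtains the corollary by chaining $\det(L)=0\Rightarrow b_1(S_2(L))>0$, the proposition producing the $\ZH$ symmetric triple $(S_6(L),S_3(L),\chi)$ with $\chi=p_2^{\st}\alpha$, and Theorem \ref{main_theorem}(i). Your explicit verification that $\chi\neq 0$ via injectivity of $p_2^{\st}$ on real cohomology (transfer for the finite cyclic action) is a worthwhile point that the paper leaves implicit, since a zero class would only yield the zero form.
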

	
	Using LinkInfo \cite{linkinfo}, we find all determinant 0 links with crossing less than 11 such that the three cyclic branched covering is a rational homology sphere, which we will list in the Appendix \ref{linktablelist}.
	
	Let $L_1,L_2$ be two oriented links on $S^3$ and let $L:=L_1+L_2$ be the disjoint union links, then $\Delta_L(t)=\Delta_{L_1}(t)\Delta_{L_1}(t)$. Let $\zeta=e^{\frac{2\pi i}{3}}$, suppose $\Delta_{L_1}(-1)=0$ and $\Delta_{L_i}(\zeta)\Delta_{L_i}(\zeta^2)\neq 0$, then $b_1(S_2(L))>0$, $S_3(L)$ is a rational homology sphere. Using disjoint union, we could construct infinity number of rational homology spheres satisfies the condition of Corollary \ref{cor}.
	
	Proof of Theorem \ref{thm_infinity}: Let $L$ be any oriented links in the list in Appendix \ref{linktablelist} and $K$ be a oriented Trefoil with Alexander polynomial $\Delta_{K}(t)=1-t+t^2$. Let $\zeta=e^{\frac{2\pi i}{3}}$, then $|\Delta_{K}(\zeta)\Delta_{K}(\zeta^2)|=4$. We consider the link $L_n:=L\cup n K$, which is the disjoint union of $n$-copies of $K$. Then $\Delta_{L_n}(-1)=0$ and $|\Delta_{L_n}(\zeta)\Delta_{L_n}(\zeta^2)|=4^n|\Delta_L(\zeta)\Delta_L(\zeta^2)|$. Let $S_3(L_n)$ be the 3-cyclic branched covering of $L_n$, then $$|H_1(S_3(L_n))|=4^n|\Delta_L(\zeta)\Delta_L(\zeta^2)|\neq 0,$$ which are rational homology spheres. In addition, as the order of $H_1(S_3(L_n))$ are different, $S_3(L_n)$ will be different 3-manifolds. Therefore, we have construct infinity number of rational homology 3-sphere that admits bounded multivalued harmonic 1-form. \qed

	\newpage
	\appendix
	
	\section{}
	\label{linktablelist}
	The following table is a list of oriented links $L$ with crossings less than 11 and the 3-cyclic branched covering is a rational homology sphere. We use the date on Linkinfo \cite{linkinfo}. In particular, over $S_3(L)$, there exists a bounded multivalued harmonic 1-form. The first column contains the name of the oriented link $L$, the second column has the normalized Alexander polynomial, and the third column contains the order of $S_3(L)$'s first homology.

	\begin{tabular}{ |p{3cm}||p{6cm}|p{3cm}|  }
		\hline
		Name of Link $L$ & $\frac{\Delta_L}{t-1}$ &$|H_1(S_3(L))|$\\
		\hline
		L8n6\{0;0\}	&$	-t-t^2+t^3+t^4	$&	9	\\
		L8n6\{1;0\}	&$	-1+t^2-t^3+t^5	$&	36	\\
		L8n6\{0;1\}	&$	t+t^2-t^3-t^4	$&	9	\\
		L8n6\{1;1\}	&$	t+t^2-t^3-t^4	$&	9	\\
		L8n8\{1;0;1\}	&$	1-2t^2+t^4	$&	27	\\
		L8n8\{0;1;1\}	&$	-1+2t^2-t^4	$&	27	\\
		L9n18\{0\}	&$	-1-2t^3-t^6	$&	48	\\
		L9n18\{1\}	&$	2t^3+t^2+t^4	$&	3	\\
		L9n19\{0\}	&$	-t-t^2-t^4-t^5	$&	12	\\
		L9n19\{1\}	&$	t+t^4+t^2+t^5	$&	12	\\
		L10n56\{0\}	&$	1-2t^2+t^4	$&	27	\\
		L10n56\{1\}	&$	1-2t^2+t^4	$&	27	\\
		L10n57\{0\}&$	1-2t^2+t^4	$&	27	\\
		L10n57\{1\}	&$	1-2t^2+t^4	$&	27	\\
		L10n91\{0;0\}	&$	1-3t^2-2t+3t^3+2t^4-t^5	$&	144	\\
		L10n91\{1;0\}	&$	t^2+t^3-t^4-t^5	$&	9	\\
		L10n91\{0;1\}&$	t+t^2-t^3-t^4	$&	9	\\
		L10n91\{1;1\}	&$	t+t^2-t^3-t^4	$&	9	\\
		L10n93\{0;0\}	&$	-1-t^3+t^4+t^7	$&	36	\\
		L10n93\{1;0\}&$	t^2+t^3-t^4-t^5	$&	9	\\
		L10n93\{1;1\}	&$	t^2+t^3-t^4-t^5	$&	9	\\
		L10n94\{1;0\}	&$	t^2+t^3-t^4-t^5	$&	9	\\
		L10n94\{0;1\}	&$	-t^2-t+t^6+t^5	$&	9	\\
		L10n94\{1;1\}	&$	t^2+t^3-t^4-t^5	$&	9	\\
		L10n104\{0;1;0\}	&$	-t+2t^3-t^5	$&	27	\\
		L10n104\{1;0;1\}	&$	1-t^2-t^4+t^6	$&	27	\\
		L10n104\{1;1;1\}	&$	-t+2t^3-t^5	$&	27	\\
		L10n111\{0;0;0\}	&$	t-2t^3+t^5	$&	27	\\
		L10n111\{1;1;0\}	&$	t-2t^3+t^5	$&	27	\\
		L10n111\{0;0;1\}	&$	2t+4t^3-2t^5	$&	84	\\
		L10n111\{1;1;1\}	&$	-1+2t+t^2-4t^3+t^4+2t^5-t^6	$&	243\\
		\hline
	\end{tabular}
	
	\bibliographystyle{plain}
	\bibliography{references}
\end{document}